\newtheorem{definition}{Definition}[section]
\newtheorem{theorem}[definition]{Theorem}
\newtheorem{lemma}[definition]{Lemma}
\newtheorem{corollary}[definition]{Corollary}
\theoremstyle{definition}
\newtheorem{remark}[definition]{Remark}
\newcommand\CC{\mathbb{C}}
\newcommand\NN{\mathbb{N}}
\newcommand\RR{\mathbb{R}}
\newcommand\TT{\mathbb{T}}
\newcommand\ZZ{\mathbb{Z}}
\newcommand\ds{\displaystyle}
\newcommand\style{\mathcal }          
\newcommand\hil{\style H}                        
\newcommand\h{\hil}
\newcommand\bh{\style B(\style H)}    
\newcommand\tr{ \mbox{Tr} } 
\begin{document}

\numberwithin{equation}{section}

\title{Majorisation and Kadison's Carpenter's Theorem}

\author{Mart\'\i n~Argerami}

\address{Department of Mathematics and Statistics,
University of Regina,
Regina, Saskatchewan S4S 0A2,
Canada}
\email{argerami@math.uregina.ca}

\thanks{This work is supported in part by the NSERC Discovery Grant program}
\keywords{Majorisation, majorization, Schur-Horn theorem, Carpenter's theorem}
\subjclass[2010]{Primary 47B15; Secondary 46L99, 47C15 }

\begin{abstract}
We discuss Kadison's Carpenter's Theorems in the context of their relation to majorisation, and we offer a new proof of his striking characterisation of the set of diagonals of orthogonal projections on Hilbert space.
\end{abstract}

\maketitle


\section{Introduction}

Majorisation is a basic notion in matrix analysis that roughly measures the spread of the entries in vectors with the same ``weight''. The breadth of applications of majorisation is surprising---we refer the reader to \cite{MOA} for hundreds of pages of examples, and to section \ref{section: majorisation} for basic facts.

In an infinite-dimensional setting, majorisation has been considered by a variety of authors in many different contexts. Significant work was done on majorisation in von Neumann algebras by several authors in the 1980s, in particular F. Hiai \cite{Hiai1987,Hiai1989,HiaiNakamura1987}. Hiai proved several results analogous to Theorem \ref{theorem: equivalent conditions for majorisation} below, framed in the context of semifinite von Neumann algebras.
More recently, interest in infinite-dimensional majorisation was revived by the work of Neumann \cite{Neumann1999}, who proved a Schur-Horn theorem for diagonal operators in $\bh$.

Kadison's remarkable results in \cite{kadison2002a,kadison2002b} were motivated by
his infinite-dimensional generalisations of the Pythagorean Theorem and its converse
(named by him the Carpenter's Theorem). One can see his result as a characterisation of the diagonal of a projection. This point of view was strengthened in
\cite{arveson-kadison2006} where the Schur-Horn Theorem was considered from the
perspective of characterizing diagonals of selfadjoint operators. Indeed, one can see the
Schur-Horn Theorem \ref{theorem: Schur-Horn} as saying that diagonals of selfadjoint
operators characterize those vectors majorised by a fixed one; alternatively, one can say
that majorisation characterizes the diagonals of selfadjoint operators. When moving to
infinite-dimensional settings, the difference in point of view becomes significant: Neumann's
Theorem \cite[Corollary 2.18 and Theorem 3.13]{Neumann1999} can be seen as a satisfactory
characterisation of majorisation, but it is not satisfactory as a characterisation of
diagonals, as shown by Theorem \ref{theorem: teorema 15} below. A complete characterisation
of the diagonals of a selfadjoint operator in $\bh$ is an open problem at the moment of
writing, as it is the finite-dimensional case for normal matrices. Recent advances have been a full Schur-Horn Theorem in II$_1$-factors by
Ravichandran \cite{Ravichandran2012}---improving over versions with closure a-la-Neumann
in \cite{ArgeramiMassey2007,ArgeramiMassey2008a,ArgeramiMassey2013}---and a
generalisation of Kadison's Carpenter's Theorem (\cite[Theorem 15]{kadison2002b}, or Theorem
\ref{theorem: teorema 15} below) to selfadjoint operators with finite spectrum by Bownik
and Jasper \cite{BownikJasper2013b,Jasper2013}. We should also mention here recent work by Kaftal and
Weiss \cite{KaftalWeiss2008,KaftalWeiss2010} on versions of the Schur-Horn Theorem for
compact operators. Other infinite-dimensional versions of majorisation are linked to
generalisations related to the Horn conjecture \cite{Klyachko1998,Tao1999,Tao2004} and
their generalisation to von Neumann algebras
\cite{Bercovici2010,Bercovici2006,Bercovici2009,Collins2008,Collins2009,Collins2011} and,
among others, the topic of frames \cite{AntezanaMasseyRuizStojanoff2009}.

The goal of this article is three-fold: first we want to formulate Kadison's results, in a self-contained way, in the corresponding context of infinite-dimensional majorisation where we feel they belong; second, we want provide an alternative proof to his results in \cite{kadison2002b} that we hope will make them easier to follow and open to a broader audience; and third, we want to advertise the beauty of the mathematics and the connections that arise from \cite{kadison2002a,kadison2002b}.

\section{Preliminaries}\label{section: majorisation}

A matrix $A$ is \emph{doubly stochastic} if all its entries are non-negative, and each of its rows and columns has sum equal to 1. A $T$-transform is a special type of doubly stochastic matrix $A$ where, for any $x\in\CC^n$, $Ax=tx+(1-t)\sigma(x)$ for some $t\in[0,1]$ and  a transposition $\sigma\in\mathbb S_n$.

\begin{definition}\label{definition: majorisation}
Given $x,y\in\RR^n$, we say that $x$ is \emph{majorised} by $y$  (notation: $x\prec y$) if
\[
\sum_{j=1}^kx_j^\downarrow\leq\sum_{j=1}^ky_j^\downarrow,\ \ k=1,\ldots,n-1; \ \ \mbox{ and }\ \
\sum_{j=1}^nx_j=\sum_{j=1}^ny_j,
\]
where $x_1^\downarrow,\ldots,x_n^\downarrow$ are the entries of $x$ in non-increasing order. Likewise we denote by $x_j^\uparrow$ the entries of $x$ in non-decreasing order (namely, $x_j^\uparrow=x_{n-j+1}^\downarrow$).
\end{definition}
Majorisation is a well-studied and well-understood notion. Among several characterisations of it, let us mention (see \cite{Bhatia1997} for detailed proofs and further material, and  \cite{MOA} for a more comprehensive treatment and applications):
\begin{theorem}\label{theorem: equivalent conditions for majorisation}
Let $x,y\in\RR^n$. The following statements are equivalent:
\begin{enumerate}
\item\label{theorem: equivalent conditions for majorisation:1} $x\prec y$;
\item\label{theorem: equivalent conditions for majorisation:2} \label{condition: arriba y abajo} $\sum_{j=1}^kx_j^\downarrow\leq\sum_{j=1}^ky_j^\downarrow$ and
$\sum_{j=1}^kx_j^\uparrow\geq\sum_{j=1}^ky_j^\uparrow$ for all $k=1,\ldots,n$;
\item $\sum_j|x_j-t|\leq\sum_j|y_j-t|$ for all $t\in\mathbb R$;
\item\label{theorem: equivalent conditions for majorisation:4} $x=Ay$, where $A\in M_n(\RR)$ is doubly stochastic;
\item\label{theorem: equivalent conditions for majorisation:5} $x=T_r\cdots T_1y$ for some $T$-transforms $T_1,\ldots,T_r$;
\item $x\in\mbox{conv}\{Sy:\ S\in\mathbb S_n\}$;
\item $\sum_j\phi(x_j)\leq\sum_j\phi(y_j)$ for all convex functions $\phi:\mathbb R\to\mathbb R$.
\end{enumerate}
\end{theorem}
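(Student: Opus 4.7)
The plan is to establish a central cycle of implications $(1) \Rightarrow (5) \Rightarrow (4) \Rightarrow (6) \Rightarrow (1)$ that handles conditions (1), (4), (5), (6) together, and then to attach the remaining conditions (2), (3), (7) by direct arguments departing from (1).

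The equivalence $(1) \Leftrightarrow (2)$ is purely algebraic: the sum equality $\sum x_j = \sum y_j$ allows the inequality on partial sums of $x_j^\downarrow$ to be converted, by subtraction from the total, into the reversed inequality on partial sums of $x_j^\uparrow$. For the analytic conditions I would rewrite $\sum_j |x_j - t| = 2 \sum_j (x_j - t)_+ + n t - \sum_j x_j$ and use that $t \mapsto \sum_j (x_j - t)_+$ is piecewise linear with breakpoints at the entries of $x$ and slopes encoding the partial sums $\sum_{j=1}^k x_j^\downarrow$; evaluating at well-chosen $t$ and taking $t \to \pm\infty$ (to extract the sum equality) yields $(1) \Leftrightarrow (3)$. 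Then $(1) \Leftrightarrow (7)$ follows because applying $(7)$ to $\phi(x) = |x - t|$ recovers $(3)$, while conversely any convex $\phi$ admits an integral representation as an affine function plus a superposition of functions of the form $x \mapsto (x - t)_+$, so the partial-sum inequalities propagate to all convex $\phi$.

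For the central cycle, the delicate step is $(1) \Rightarrow (5)$, which I would prove by induction on the number of coordinates where the decreasing rearrangements of $x$ and $y$ disagree. Assuming $x \neq y$ in decreasing order, locate the smallest index $i$ with $x_i < y_i$ and the smallest $j > i$ with $x_j > y_j$; the existence of $j$ is guaranteed by the sum equality. A single $T$-transform on coordinates $i, j$ of $y$, with parameter chosen so that the transformed vector $y'$ coincides with $x$ in at least one additional coordinate, satisfies $x \prec y' \prec y$, enabling the induction. The implication $(5) \Rightarrow (4)$ is immediate since $T$-transforms are doubly stochastic and products of doubly stochastic matrices are doubly stochastic; $(4) \Rightarrow (6)$ is Birkhoff's theorem on the extreme points of the doubly stochastic polytope; and $(6) \Rightarrow (1)$ follows because each permutation $Sy$ has the same ordered entries as $y$ and hence $Sy \prec y$ with equality, while the functional $y \mapsto \sum_{j=1}^k y_j^\downarrow$ is convex in $y$, so the inequality passes to convex combinations.

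The main obstacle is the $T$-transform construction in $(1) \Rightarrow (5)$: one must verify that each reduction step preserves majorisation at \emph{every} partial sum, not merely at the indices involved in the transposition, which requires a careful case analysis depending on where the active coordinates $i, j$ sit among the full decreasing arrangement. A secondary hurdle is the invocation of Birkhoff's theorem, whose own proof requires a separate combinatorial argument about extreme points (for instance, via K\"onig's theorem on matchings in bipartite graphs).
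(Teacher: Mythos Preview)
Your proposal is correct and more complete than the paper's proof, which explicitly restricts itself to a sketch of the implications among (1), (2), (4), (5) only, since those are all that the remainder of the paper requires. The chief structural difference is in closing the cycle: the paper proves $(4)\Rightarrow(1)$ directly by the elementary computation
\[
\sum_{j=1}^k x_j - \sum_{j=1}^k y_j = \sum_{j=1}^k (s_j-1)(y_j-y_k) + \sum_{j=k+1}^n (y_j-y_k)s_j \leq 0,
\]
where $s_i=\sum_{j=1}^k A_{ji}$, thereby avoiding Birkhoff's theorem entirely. Your route $(4)\Rightarrow(6)\Rightarrow(1)$ is valid but, as you note, carries the cost of proving Birkhoff separately; the paper's direct argument is cheaper and self-contained. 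For $(1)\Rightarrow(5)$ both arguments are the standard Hardy--Littlewood--P\'olya reduction via a single $T$-transform per step, with only cosmetic differences in how the two active indices are selected. Your treatment of (3) and (7) via the piecewise-linear representation of $\sum_j(x_j-t)_+$ and the integral representation of convex functions is standard and correct, and simply goes beyond what the paper chose to include.
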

\begin{proof}
Since we will only use conditions \eqref{theorem: equivalent conditions for majorisation:1}, \eqref{theorem: equivalent conditions for majorisation:2}, \eqref{theorem: equivalent conditions for majorisation:4}, and \eqref{theorem: equivalent conditions for majorisation:5}, we only sketch the proofs of the implications among  those.

\eqref{theorem: equivalent conditions for majorisation:1}$\iff$\eqref{theorem: equivalent conditions for majorisation:2} follows straightforwardly from the fact that $x_j^\uparrow=x_{n-j+1}^\downarrow$.

\eqref{theorem: equivalent conditions for majorisation:5}$\implies$\eqref{theorem: equivalent conditions for majorisation:4} is also easy since $T$-transforms are doubly stochastic, and the product of doubly stochastic is again doubly stochastic.

\eqref{theorem: equivalent conditions for majorisation:4}$\implies$\eqref{theorem: equivalent conditions for majorisation:1} Let $x=Ay$. We can assume, without loss of generality, that both $x$ and $y$ are ordered non-increasingly---this, from the fact that majorisation does not depend on the order of the entries. We have, for any $k=1,\ldots n$,
\[
\sum_{j=1}^kx_j=\sum_{j=1}^k\sum_{h=1}^nA_{jh}y_h.
\]
 Let $s_i=\sum^k_{j=1}A_{ji}$. Then  \ $0 \leq s_i \leq 1$, $\sum^n_{i=1}s_i=k$ and
$ \sum^k_{j=1}x_j =\sum^n_{i=1}s_iy_i.$
Using that $\sum^n_{i=1}s_iy_k=ky_k$, one can verify that
\begin{equation*}
 \sum^k_{j=1}x_j -\sum^k_{j=1}y_j
=\sum^k_{j=1}(s_j-1)(y_j-y_k)+\sum^n_{j=k+1}(y_j-y_k)s_j\leq0.
\end{equation*}
So \ $\sum^k_{j=1}x_j\leq \sum^k_{j=1}y_j$ \ for all $k$. When $k=n$, the equality is easy to check.

\eqref{theorem: equivalent conditions for majorisation:1}$\implies$\eqref{theorem: equivalent conditions for majorisation:5} Since $y_n\leq x_1\leq y_1$, we can write $x_1$ as a convex combination of $y_1$ and some $y_k$ with $k=\min\{j:\ y_j\leq x_1\}$; this can be implemented by a $T$- transform $T_1$. Now $T_1y$ has first coordinate $x_1$, and it is possible to show that $(x_2,\ldots,x_n)\prec y'$, where $y'$ is $T_1y$ with the first coordinate removed. We can then proceed inductively.
\end{proof}

\bigskip

We mention below three elementary results on majorisation that will be of use later.

\begin{lemma}\label{lemma: 3-dimensional majorisation 2}
Let $x_1,\ldots,x_n\in[0,1]$ with $x_1+\cdots+x_n=k\in\NN$. Then
\[
(x_1,\ldots,x_n)\prec(\overbrace{1,\ldots,1}^{k\text{ times}},\overbrace{0,\ldots,0}^{n-k\text{ times}}).
\]
\end{lemma}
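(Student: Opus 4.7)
The statement is essentially a direct verification against the definition, with no real difficulty. Here is how I would organise it.

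First I would order the entries non-increasingly: since majorisation is invariant under permutations, we may assume $x_1\geq x_2\geq\cdots\geq x_n$. The majorant $y=(1,\ldots,1,0,\ldots,0)$ (with $k$ ones) is already in non-increasing order, so $y_j^\downarrow=1$ for $j\leq k$ and $y_j^\downarrow=0$ for $j>k$, giving partial sums $\sum_{j=1}^m y_j^\downarrow=\min(m,k)$.

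Then the two families of partial-sum inequalities from Definition \ref{definition: majorisation} are immediate. For $m\leq k$, we have
\[
\sum_{j=1}^m x_j^\downarrow \;\leq\; \sum_{j=1}^m 1 \;=\; m \;=\; \sum_{j=1}^m y_j^\downarrow,
\]
because each $x_j\in[0,1]$. For $m>k$, we have
\[
\sum_{j=1}^m x_j^\downarrow \;\leq\; \sum_{j=1}^n x_j \;=\; k \;=\; \sum_{j=1}^m y_j^\downarrow,
\]
since the omitted terms $x_{m+1}^\downarrow,\ldots,x_n^\downarrow$ are all non-negative. Finally, the total-sum condition $\sum_j x_j=\sum_j y_j=k$ is simply the hypothesis. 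This establishes $(x_1,\ldots,x_n)\prec y$.

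There is no hard step here; the only thing to be careful about is remembering to reduce to the sorted case at the start, and to split the range of $m$ at $k$ so that the right-hand partial sum of $y$ has the correct form in each case.
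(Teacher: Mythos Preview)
Your proof is correct. The paper itself does not supply a proof of this lemma: it is listed among ``three elementary results on majorisation'' and left to the reader. Your direct verification from Definition~\ref{definition: majorisation}, splitting the partial sums at $m=k$, is exactly the kind of argument intended, and there is nothing to add.
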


\begin{lemma}\label{lemma: 3-dimensional majorisation}
Let $x_1,\ldots,x_n,\delta\in[0,1]$, $k\in\{1,\ldots,n-1\}$ with
$x_1+\cdots+x_n=\delta+k$. Then
\[
(x_1,\ldots,x_n)\prec(\overbrace{1,\ldots,1}^{k\text{
times}},\ \delta, \overbrace{0,\ldots,0}^{n-1-k\text{ times}}).
\]
\end{lemma}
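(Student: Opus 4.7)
The plan is a direct verification from Definition \ref{definition: majorisation}. Without loss of generality reorder so that $x_1\geq x_2\geq\cdots\geq x_n$, and observe that the majorant
\[
y=(\overbrace{1,\ldots,1}^{k},\delta,\overbrace{0,\ldots,0}^{n-1-k})
\]
is already in non-increasing order because $\delta\in[0,1]$. So we only need to compare partial sums entry-wise.

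First I would record the partial sums of $y$: for $1\leq m\leq k$ we have $\sum_{j=1}^m y_j = m$, and for $k+1\leq m\leq n$ we have $\sum_{j=1}^m y_j = k+\delta$. The sum equality $\sum_j x_j=\sum_j y_j=k+\delta$ holds by assumption, so the Definition reduces to showing $\sum_{j=1}^m x_j \leq \sum_{j=1}^m y_j$ for $1\leq m\leq n-1$.

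The argument then splits into two cases. For $m\leq k$ the inequality $\sum_{j=1}^m x_j\leq m$ is immediate from $x_j\in[0,1]$. For $k+1\leq m\leq n-1$, I would use instead that the remaining entries are nonnegative, to write
\[
\sum_{j=1}^m x_j \;=\; (k+\delta)-\sum_{j=m+1}^n x_j \;\leq\; k+\delta.
\]
This disposes of both regimes with no real work, so I do not expect any genuine obstacle; the statement is essentially an unpacking of the definition once one notices that $y$ is already sorted and that the constraints $x_j\in[0,1]$ and $x_j\geq 0$ each handle one of the two regimes.
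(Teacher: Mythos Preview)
Your argument is correct and is the natural direct verification from Definition~\ref{definition: majorisation}: once $y$ is seen to be already sorted, the two regimes $m\leq k$ and $m\geq k+1$ are handled by $x_j\leq 1$ and $x_j\geq 0$ respectively, with the total-sum equality given by hypothesis. The paper itself does not supply a proof of this lemma; it is listed among ``three elementary results on majorisation'' and left to the reader, so there is nothing further to compare.
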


\begin{lemma}\label{lemma: majorisation concentrates}
Let $x_1,\ldots,x_n,x_1',\ldots,x_n',y_1,\ldots,y_m,y_1',\ldots,y_m'\in\RR$ such that
$x_j'\geq x_j$ and $y_j\geq y_j'$ for all $j$, 
\[
\min\{x_1,\ldots,x_n\}\geq \max\{y_1,\ldots y_m\},
\]
and
\[
x_1'+\cdots+x_n'+y_1'+\cdots+y_m'=x_1+\cdots+x_n+y_1+\cdots+y_m.
\]
Then
\[
(x_1,\ldots,x_n,y_1,\ldots,y_m)\prec(x_1',\ldots,x_n',y_1',\ldots,y_m').
\]
\end{lemma}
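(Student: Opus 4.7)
The plan is to verify condition~\eqref{theorem: equivalent conditions for majorisation:2} of Theorem~\ref{theorem: equivalent conditions for majorisation} for the pair
\[
z=(x_1,\ldots,x_n,y_1,\ldots,y_m),\qquad z'=(x_1',\ldots,x_n',y_1',\ldots,y_m').
\]
The sum equality is part of the hypothesis, so what remains is the system of partial-sum inequalities.

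First I would exploit the structural fact that the hypotheses already segregate the $x$- and $y$-coordinates in both vectors. Chaining
\[
x'_j \;\geq\; x_j \;\geq\; \min_i x_i \;\geq\; \max_i y_i \;\geq\; y_i \;\geq\; y'_i
\]
shows that every $x$-entry (primed or not) exceeds every $y$-entry (primed or not). Hence in the non-increasing orderings of both $z$ and $z'$ the top $n$ positions are occupied by the $x$-coordinates and the bottom $m$ by the $y$-coordinates. In particular, for $1\le k\le n$ the top-$k$ sum of $z$ coincides with the top-$k$ sum of $(x_1,\ldots,x_n)$ (and similarly for $z'$), while for $1\le\ell\le m$ the bottom-$\ell$ sum of $z$ coincides with the bottom-$\ell$ sum of $(y_1,\ldots,y_m)$.

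The key technical point is the elementary monotonicity: if $u,v\in\RR^p$ satisfy $u\le v$ coordinate-wise, then both $\sum_{j=1}^k u_j^\downarrow\le\sum_{j=1}^k v_j^\downarrow$ and $\sum_{j=1}^k u_j^\uparrow\le\sum_{j=1}^k v_j^\uparrow$ hold for every $k$. For the first, pick a $k$-subset $S$ on which $u$ attains its top-$k$ sum and observe $\sum_{j=1}^k u_j^\downarrow=\sum_{i\in S}u_i\le\sum_{i\in S}v_i\le\sum_{j=1}^k v_j^\downarrow$; the second is dual. Applying this with $(u,v)=(x,x')$ supplies the top-$k$ inequalities $\sum_{j=1}^k z_j^\downarrow\le\sum_{j=1}^k (z')_j^\downarrow$ for $k\le n$, while applying it with $(u,v)=(y',y)$ supplies the bottom-$\ell$ inequalities $\sum_{j=1}^\ell (z')_j^\uparrow\le\sum_{j=1}^\ell z_j^\uparrow$ for $\ell\le m$.

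Modulo the sum equality these two families cover every $k\in\{1,\ldots,n+m\}$---top-$k$ when $k\le n$, and the equivalent bottom-$(n+m-k)$ when $k>n$---so condition~\eqref{theorem: equivalent conditions for majorisation:2}, and hence majorisation, follows. I do not anticipate any genuine obstacle: the argument is really bookkeeping on top of the single observation that the separation hypothesis forces the $x$'s and $y$'s to sit in the correct relative order within both vectors.
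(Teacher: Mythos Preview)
Your argument is correct. The chain $x_j'\ge x_j\ge\min_i x_i\ge\max_i y_i\ge y_i\ge y_i'$ does force the $x$-block to sit above the $y$-block in both $z$ and $z'$, the monotonicity lemma for top-$k$ and bottom-$k$ sums under coordinatewise comparison is proved exactly as you state, and the two families of partial-sum inequalities together with the sum equality do yield all the top-$k$ inequalities required by the definition of majorisation. One minor point of phrasing: you say you are verifying condition~\eqref{theorem: equivalent conditions for majorisation:2}, but what you actually check is condition~\eqref{theorem: equivalent conditions for majorisation:1} (top-$k$ inequalities plus the sum equality), using the bottom-$\ell$ inequalities only as an intermediate step for $k>n$ via the identity $\sum_{j=1}^k z_j^\downarrow=\sum_j z_j-\sum_{j=1}^{n+m-k}z_j^\uparrow$. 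This is harmless, but it would read more cleanly if you said so explicitly.

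The paper itself does not supply a proof of this lemma; it is listed among three ``elementary results on majorisation'' and left to the reader. Your verification is exactly the kind of direct argument one would expect here.
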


\section{The Schur-Horn Theorem}

In this section we offer a proof of the Schur-Horn Theorem \cite{Schur1923,Horn1954} based on the idea of
\cite[Theorem 6]{kadison2002a}; a proof along the same idea appears in \cite{arveson-kadison2006}.
Kadison's trick (Lemma \ref{lemma: the 2x2 trick}) makes the proof very straightforward (modulo Theorem \ref{theorem: equivalent conditions for majorisation}), at the cost of using complex unitaries instead of orthogonal ones as in Horn's original result.

Notation: for $x\in \RR^n$, $D_x\in M_n(\RR)$ is the matrix with diagonal $x$ and zeroes elsewhere. If $A\in M_n(\CC)$, we write $\lambda(A)\in\CC^n$ for the vector whose entries are the eigenvalues of $A$, counting multiplicities.

The next Lemma is Kadison's key idea that allows for a proof of the Schur-Horn Theorem.

\begin{lemma}[Kadison's Trick]\label{lemma: the 2x2 trick}
Let $A\in M_2(\CC)$ be selfadjoint, and let $t\in[0,1]$. Then there exists a unitary $U\in M_2(\CC)$ such that
the diagonal of $UAU^*$ is $tA_{11}+(1-t)A_{22}, (1-t)A_{11}+tA_{22}$.
\end{lemma}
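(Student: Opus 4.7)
The plan is to construct the unitary $U$ explicitly as a kind of ``complex rotation''. Writing $A = \begin{pmatrix} a & b \\ \bar b & d\end{pmatrix}$ with $a,d\in\RR$, unitary conjugation preserves the trace, so it suffices to arrange $(UAU^*)_{11} = ta + (1-t)d$; the other diagonal entry is then forced by $(UAU^*)_{11} + (UAU^*)_{22} = a + d$.

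My first instinct would be the real rotation $R_\theta = \begin{pmatrix} \cos\theta & -\sin\theta \\ \sin\theta & \cos\theta \end{pmatrix}$. A direct calculation gives $(R_\theta AR_\theta^*)_{11} = \cos^2\theta\,a + \sin^2\theta\,d - 2\cos\theta\sin\theta\,\text{Re}(b)$, so the desired convex combination of $a$ and $d$ is present, but an extra term involving $b$ spoils it. This is exactly why Horn's classical result uses orthogonal matrices only for real symmetric $A$; for general complex selfadjoint $A$ one needs more freedom.

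To absorb the unwanted term I replace $R_\theta$ by $U = \begin{pmatrix} c & -\bar\alpha\, s \\ \alpha\, s & c \end{pmatrix}$, with $c,s\in\RR$, $c^2+s^2=1$, and a unimodular scalar $\alpha$. A two-line computation shows $(UAU^*)_{11} = c^2 a + s^2 d - 2cs\,\text{Re}(\alpha b)$. Choosing $\alpha$ of modulus one so that $\alpha b$ is purely imaginary---for example $\alpha = i\bar b/|b|$ if $b\ne 0$, and $\alpha = 1$ otherwise---makes the cross term vanish, and then setting $c = \sqrt{t}$, $s = \sqrt{1-t}$ produces $(UAU^*)_{11} = ta + (1-t)d$ exactly.

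There is no serious obstacle here; the only conceptually non-routine step is introducing the phase $\alpha$, whose sole purpose is to rotate the off-diagonal entry $b$ onto the imaginary axis. That single extra complex degree of freedom is precisely what Kadison's trick exploits, and it is the reason the conclusion requires unitary rather than merely real orthogonal conjugation.
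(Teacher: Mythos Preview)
Your proof is correct and is essentially the same as the paper's: both construct an explicit ``complex rotation'' unitary in which a unimodular phase is inserted to rotate the off-diagonal entry onto the imaginary axis, so that the cross term $2cs\,\mathrm{Re}(\alpha b)$ vanishes and the diagonal of $UAU^*$ becomes the desired convex combination. The only cosmetic difference is where the phase sits in the matrix (the paper places it in a column rather than on the off-diagonal entries), which is immaterial.
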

\begin{proof}
Let $\theta$ such that $t=\sin\theta$, and let $c\in\TT$ such
that $cA_{12}=-\overline{c}A_{21}$ (such $c$ always exists since $A_{21}=\overline{A_{12}}$). Then take
\[
U=\begin{bmatrix}c\sin\theta&-\cos\theta\\ c\cos\theta& \sin\theta\end{bmatrix}.\qedhere
\]
\end{proof}

\begin{lemma}\label{lemma: implementation of T-transforms}
Let $y\in\RR^n$, $A\in M_n(\CC)$ selfadjoint with diagonal $y$,
and consider a $T$-transform $T$. Then there exists a unitary $V\in M_n(\CC)$ such that
$VAV^*$ has diagonal $Ty$.
\end{lemma}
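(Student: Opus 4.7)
The plan is to reduce the statement to Kadison's Trick (Lemma \ref{lemma: the 2x2 trick}) applied to an appropriate $2\times2$ principal submatrix of $A$, and then embed the resulting unitary into $M_n(\CC)$ as identity on the remaining coordinates.

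More concretely, by the definition of a $T$-transform we may fix $t\in[0,1]$ and a transposition $\sigma\in\mathbb S_n$ swapping some indices $i<j$, so that $Ty$ agrees with $y$ on every coordinate $k\notin\{i,j\}$, while $(Ty)_i=ty_i+(1-t)y_j$ and $(Ty)_j=(1-t)y_i+ty_j$. Consider the $2\times 2$ principal submatrix
\[
B=\begin{bmatrix} A_{ii} & A_{ij}\\ A_{ji} & A_{jj}\end{bmatrix}=\begin{bmatrix} y_i & A_{ij}\\ A_{ji} & y_j\end{bmatrix},
\]
which is selfadjoint. By Lemma \ref{lemma: the 2x2 trick} applied to $B$ and the parameter $t$, there exists a unitary $W\in M_2(\CC)$ such that the diagonal of $WBW^*$ is exactly $(ty_i+(1-t)y_j,\,(1-t)y_i+ty_j)$.

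Next I would define $V\in M_n(\CC)$ as the unitary obtained by placing the entries of $W$ in the $(i,j)$-block, and taking $1$ on each remaining diagonal entry and $0$ elsewhere. Since $V$ acts as the identity on every coordinate outside $\{i,j\}$, conjugation by $V$ alters only the rows and columns indexed by $i$ and $j$ of $A$; in particular, the diagonal entries $(VAV^*)_{kk}$ for $k\notin\{i,j\}$ remain equal to $y_k=(Ty)_k$. The $2\times 2$ principal submatrix of $VAV^*$ at rows/columns $i,j$ is precisely $WBW^*$, so its diagonal matches $(Ty)_i,(Ty)_j$ by construction. Hence $VAV^*$ has diagonal $Ty$.

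There is no real obstacle here: Lemma \ref{lemma: the 2x2 trick} carries all the analytical weight, and the only care needed is the routine verification that embedding a $2\times 2$ unitary block inside the identity produces a unitary in $M_n(\CC)$ whose conjugation action leaves the ``far'' diagonal entries of $A$ untouched.
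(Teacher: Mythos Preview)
Your proof is correct and follows essentially the same approach as the paper: both apply Lemma~\ref{lemma: the 2x2 trick} to the $2\times2$ principal submatrix of $A$ at the two indices moved by the transposition, and then embed the resulting $2\times2$ unitary as a block inside the identity in $M_n(\CC)$. If anything, you spell out slightly more carefully than the paper why the remaining diagonal entries are unchanged and why the $(i,j)$ principal submatrix of $VAV^*$ equals $WBW^*$.
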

\begin{proof}
By definition there exist $t\in[0,1]$ and a transposition $\sigma=(k\ j)$ such that
$Ty=ty+(1-t)\sigma(y)$. So
\[
Ty=(y_1,\ldots,ty_j+(1-t)y_k,\ldots,(1-t)y_j+ty_k,\ldots,y_n).
\]
Applying Lemma
\ref{lemma: the 2x2 trick} to the matrix $A_0=\begin{bmatrix}y_j&A_{jk} \\ A_{kj}& y_k\end{bmatrix}$we get a unitary $U$ such that the diagonal of $UA_0U^*$ consists of $ty_j+(1-t)y_k$ and
$(1-t)y_j+ty_k$.

Let $V$ be the unitary matrix that consists of $U$ in the submatrix corresponding to rows
$j$ and $k$, and the identity everywhere else.
Then $VAV^*$ has diagonal $Ty$.
\end{proof}

Theorem \ref{theorem: Schur-Horn} below is the celebrated Schur-Horn Theorem. 
The implication \eqref{theorem: Schur-Horn:2}$\implies$\eqref{theorem: Schur-Horn:1} is a result due to I. Schur \cite{Schur1923}. The converse is due to A. Horn \cite{Horn1954}. Horn's argument produces unitaries in $M_n(\RR)$; here we opt for Kadison's version which produces unitaries in $M_n(\CC)$, but it implies Corollary \ref{corollary: dispersa delta}, which will be of use in Section \ref{section: main theorems}.
Several proofs of the Schur-Horn Theorem are known; among others, we mention \cite{CarlenLieb2009,ChanLi1983,DillonHeathSustikTropp2005,LeiteRicheTomei1999,Mirsky1958}

\begin{theorem}[Schur-Horn]\label{theorem: Schur-Horn}
Let $x,y\in\RR^n$. Then the following conditions are equivalent:
\begin{enumerate}
\item\label{theorem: Schur-Horn:2} there exists a selfadjoint matrix $A\in M_n(\CC)$ with diagonal $x$ and eigenvalue vector $y$;
\item\label{theorem: Schur-Horn:1} $x\prec y$.
\end{enumerate}
\end{theorem}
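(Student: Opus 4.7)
The plan is to prove the two directions separately, using the machinery already developed in the excerpt, with Theorem \ref{theorem: equivalent conditions for majorisation} as the bridge between the diagonal/eigenvalue relationship and the combinatorial definition of majorisation.

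For the direction \eqref{theorem: Schur-Horn:2}$\implies$\eqref{theorem: Schur-Horn:1} (Schur's half), I would invoke the spectral theorem to write $A = U D_y U^*$ for some unitary $U \in M_n(\CC)$ (after permuting the entries of $y$ if necessary, which does not affect majorisation). Then the diagonal entries of $A$ satisfy
\[
x_i = (UD_yU^*)_{ii} = \sum_{j=1}^n |U_{ij}|^2\, y_j.
\]
The matrix $S = (|U_{ij}|^2)_{i,j}$ has non-negative real entries, and since $U$ is unitary, its rows and columns are unit vectors in $\CC^n$, so every row sum and every column sum of $S$ equals $1$. Thus $S$ is doubly stochastic and $x = Sy$, which by the implication \eqref{theorem: equivalent conditions for majorisation:4}$\implies$\eqref{theorem: equivalent conditions for majorisation:1} of Theorem \ref{theorem: equivalent conditions for majorisation} gives $x \prec y$.

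For the converse \eqref{theorem: Schur-Horn:1}$\implies$\eqref{theorem: Schur-Horn:2} (Horn's half), the idea is to use T-transforms as an inductive scaffolding. Assuming $x \prec y$, by the implication \eqref{theorem: equivalent conditions for majorisation:1}$\implies$\eqref{theorem: equivalent conditions for majorisation:5} of Theorem \ref{theorem: equivalent conditions for majorisation} there exist T-transforms $T_1, \ldots, T_r$ such that $x = T_r \cdots T_1 y$. Start with the selfadjoint matrix $A_0 = D_y$, which has diagonal $y$ and eigenvalue vector $y$. Apply Lemma \ref{lemma: implementation of T-transforms} to $A_0$ and $T_1$: there is a unitary $V_1$ such that $A_1 := V_1 A_0 V_1^*$ has diagonal $T_1 y$. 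Proceed inductively: at stage $k$, Lemma \ref{lemma: implementation of T-transforms} applied to $A_{k-1}$ (whose diagonal is $T_{k-1}\cdots T_1 y$) and the T-transform $T_k$ yields a unitary $V_k$ such that $A_k := V_k A_{k-1} V_k^*$ has diagonal $T_k \cdots T_1 y$. After $r$ steps, the matrix
\[
A := A_r = (V_r \cdots V_1)\, D_y\, (V_r \cdots V_1)^*
\]
is selfadjoint, has diagonal $T_r \cdots T_1 y = x$, and is unitarily equivalent to $D_y$, so its eigenvalue vector is $y$.

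The main obstacle is already absorbed into Lemma \ref{lemma: the 2x2 trick} and Lemma \ref{lemma: implementation of T-transforms}: the nontrivial step is realising that individual T-transforms on the diagonal can be implemented by $2 \times 2$ unitary conjugations (Kadison's trick), which requires the flexibility of complex phases $c \in \TT$ to cancel the off-diagonal entries $A_{jk}$, $A_{kj}$ before rotating. Once this is in place, both halves reduce to a clean application of Theorem \ref{theorem: equivalent conditions for majorisation} plus a straightforward iteration.
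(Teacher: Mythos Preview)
Your proof is correct and follows essentially the same approach as the paper: Schur's direction via the unistochastic matrix $(|U_{ij}|^2)$ and condition \eqref{theorem: equivalent conditions for majorisation:4} of Theorem \ref{theorem: equivalent conditions for majorisation}, and Horn's direction via the $T$-transform factorisation of condition \eqref{theorem: equivalent conditions for majorisation:5} iterated through Lemma \ref{lemma: implementation of T-transforms} starting from $D_y$. The only cosmetic difference is whether one writes $A=UD_yU^*$ or $A=U^*D_yU$, which is immaterial.
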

\begin{proof}
\eqref{theorem: Schur-Horn:2}$\implies$\eqref{theorem: Schur-Horn:1} As $A$ is selfadjoint, it is diagonalizable
via a unitary: that is, there exists a unitary $U\in M_n(\CC)$ with $UAU^*=D_y$. So $A=U^*D_yU$;
writing this equation for each element of the diagonal of $A$, we get
in components
\[
x_j=(U^*D_yU)_{jj}=\sum_{k,h=1}^n (D_y)_{kh} U_{jk}\overline{U_{jh}}=\sum_{k=1}^n|U_{jk}|^2y_k=(By)_j,
\]
i.e. $x=By$, where $B$ is the matrix with entries $B_{jk}=|U_{jk}|^2$---such matrices are called \emph{orthostochastic} or \emph{unistochastic} in the literature. A straightforward calculation (using $U^*U=UU^*=I$) shows that $B$
is doubly stochastic, and so $x\prec y$ by Theorem \ref{theorem: equivalent conditions for majorisation}.

\eqref{theorem: Schur-Horn:1}$\implies$\eqref{theorem: Schur-Horn:2}
Assume now that $x\prec y$. By Theorem \ref{theorem: equivalent conditions for majorisation}, there
exist $T$-transforms $T_1,\ldots,T_r$ with $x=T_r\cdots T_1y$.

Applying Lemma \ref{lemma: implementation of T-transforms} repeteadly, after $r$ times we get unitaries $V_1,\ldots,V_r$
such that $V_1D_yV_1^*$ has diagonal $T_1y$, $V_2V_1D_yV_1^*V_2^*$ has diagonal $T_2T_1y$, and so on until
$A=(V_r\cdots V_1)D_y(V_r\cdots V_1)^*$ has diagonal $T_r\cdots T_1y=x$. As $A$ is unitarily equivalent with $D_y$, it has eigenvalues  $y$.
\end{proof}

Below we state the Carpenter's Theorem, which is a particular case of the Schur-Horn Theorem above---namely, the case where all entries of $y$ are $0$ and $1$. The name comes from Kadison's picture  in \cite{kadison2002a,kadison2002b} where he sees it as a converse of a generalized Pythagorean Theorem. In turn, the converse of the Pythagorean Theorem rightly deserves that name, since it guarantees that if the sides of a triangle are a Pythagorean triple, then the triangle is a right one---a very useful fact in carpentry.

\begin{corollary}[Carpenter's Theorem]\label{corollary: finite-dimensional carpenter}
Let $a_1,\ldots,a_n\in[0,1]$. Then the following conditions are equivalent:
\begin{enumerate}
\item there exists a projection $P\in M_n(\RR)$ with diagonal $a_1,\ldots,a_n$.
\item $\sum_{j=1}^na_n\in\NN$;
\end{enumerate}
\end{corollary}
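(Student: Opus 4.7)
The proof is a direct application of the Schur-Horn Theorem together with Lemma \ref{lemma: 3-dimensional majorisation 2}, so the plan has two short steps.

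For the implication from the existence of $P$ to $\sum_j a_j\in\NN$, I would simply observe that if $P\in M_n$ is a projection with diagonal $(a_1,\ldots,a_n)$, then $\sum_{j=1}^n a_j=\tr(P)$, and the trace of any projection in $M_n$ equals its rank, which is a non-negative integer bounded by $n$.

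For the converse, set $k=\sum_{j=1}^n a_j\in\NN$. Since each $a_j\in[0,1]$ and they sum to the integer $k$, Lemma \ref{lemma: 3-dimensional majorisation 2} gives
\[
(a_1,\ldots,a_n)\prec(\underbrace{1,\ldots,1}_{k},\underbrace{0,\ldots,0}_{n-k}).
\]
By Theorem \ref{theorem: Schur-Horn}, there exists a selfadjoint $A\in M_n(\CC)$ whose diagonal is $(a_1,\ldots,a_n)$ and whose eigenvalue vector is $(1,\ldots,1,0,\ldots,0)$. Any selfadjoint matrix with spectrum contained in $\{0,1\}$ is a projection (it is unitarily equivalent to the diagonal projection $D_y$), which finishes the argument.

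I do not expect a genuine obstacle: the main content has been packaged into Lemma \ref{lemma: 3-dimensional majorisation 2} and Theorem \ref{theorem: Schur-Horn}. The only subtlety worth flagging is the field in which the projection lives; the proof above via Kadison's trick produces a selfadjoint (hence a projection) in $M_n(\CC)$. To obtain a projection in $M_n(\RR)$ as stated, one either appeals to Horn's original argument (which implements $T$-transforms by real rotations, since the $2\times 2$ trick can be realized by a real rotation when the off-diagonal entry is real) or notes that one may choose the ambient selfadjoint matrix to have real entries throughout the inductive construction, because at each step the $2\times 2$ block being rebalanced can be taken real. This is a minor adjustment and does not affect the structure of the proof.
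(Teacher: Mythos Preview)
Your proposal is correct and follows essentially the same route as the paper: both directions are obtained by combining Lemma~\ref{lemma: 3-dimensional majorisation 2} with the Schur--Horn Theorem, using that a projection is precisely a selfadjoint matrix with eigenvalues in $\{0,1\}$. You are slightly more explicit in the forward direction (invoking $\tr(P)=\mathrm{rank}(P)$) and you correctly flag the $M_n(\RR)$ versus $M_n(\CC)$ discrepancy, which the paper's own proof does not address.
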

\begin{proof}
A projection is a selfadjoint matrix such that all its eigenvalues are zeroes and ones. In such case, Lemma \ref{lemma: 3-dimensional majorisation} guarantees that $\sum a_n=m\in\NN$ is the same as $\{a_1,\ldots,a_n\}\prec(\underbrace{1,\ldots,1}_{m},0,\ldots,0)$.
\end{proof}

\bigskip

The use of Kadison's trick (Lemma \ref{lemma: the 2x2 trick}) to prove the Schur-Horn Theorem allows us to obtain a variation that we will need later. If the matrix $A$ in Corollary \ref{corollary: dispersa delta} is diagonal, then the statement becomes precisely that of the \eqref{theorem: Schur-Horn:1}$\implies$\eqref{theorem: Schur-Horn:2} in the Schur-Horn Theorem \ref{theorem: Schur-Horn}. The converse does not hold, as can be easily seen by taking a non-diagonal $2\times2$ projection. 

\begin{corollary}\label{corollary: dispersa delta}
Let $x,y\in\RR^n$ with $x\prec y$ and let $A\in M_n(\CC)$ be a selfadjoint matrix with diagonal $y\in\RR^n$. Then there exists a unitary $V\in M_n(\CC)$ such that $VAV^*$ has diagonal $x$.
\end{corollary}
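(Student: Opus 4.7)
The plan is to mimic the proof of the implication \eqref{theorem: Schur-Horn:1}$\implies$\eqref{theorem: Schur-Horn:2} in Theorem \ref{theorem: Schur-Horn}, but starting from $A$ instead of $D_y$. The key observation is that Lemma \ref{lemma: implementation of T-transforms} is stated for an \emph{arbitrary} selfadjoint matrix with diagonal $y$, not just for the diagonal matrix $D_y$: it guarantees that for any $T$-transform $T$ we can find a unitary $V$ with $VAV^*$ having diagonal $Ty$, and the resulting matrix $VAV^*$ is again selfadjoint, so we may feed it back into Lemma \ref{lemma: implementation of T-transforms} with a new $T$-transform.

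Given $x\prec y$, the equivalence \eqref{theorem: equivalent conditions for majorisation:1}$\iff$\eqref{theorem: equivalent conditions for majorisation:5} in Theorem \ref{theorem: equivalent conditions for majorisation} furnishes $T$-transforms $T_1,\ldots,T_r$ with $x=T_r\cdots T_1 y$. I would then proceed inductively. Apply Lemma \ref{lemma: implementation of T-transforms} to $A$ and $T_1$ to obtain a unitary $V_1$ such that $A_1:=V_1AV_1^*$ is selfadjoint with diagonal $T_1 y$. Then apply Lemma \ref{lemma: implementation of T-transforms} to $A_1$ (whose diagonal is $T_1y$) and the $T$-transform $T_2$ to obtain $V_2$ with $A_2:=V_2 A_1 V_2^*=V_2V_1 A V_1^*V_2^*$ selfadjoint with diagonal $T_2T_1y$. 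Iterating $r$ times produces a unitary $V:=V_r\cdots V_1$ such that $VAV^*$ is selfadjoint with diagonal $T_r\cdots T_1 y=x$.

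There is essentially no obstacle here, since all the substantive work is already packaged inside Lemma \ref{lemma: implementation of T-transforms} (itself powered by Kadison's trick, Lemma \ref{lemma: the 2x2 trick}) and inside the implication \eqref{theorem: equivalent conditions for majorisation:1}$\implies$\eqref{theorem: equivalent conditions for majorisation:5} of Theorem \ref{theorem: equivalent conditions for majorisation}. The only conceptual remark worth highlighting, as the excerpt already notes, is that we are \emph{not} asserting any converse: the off-diagonal entries of $A$ are irrelevant to the argument, and in particular the eigenvalues of $A$ play no role in the hypothesis, in sharp contrast to the Schur--Horn theorem where $y$ is the eigenvalue vector rather than merely the diagonal.
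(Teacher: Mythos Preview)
Your proposal is correct and is essentially identical to the paper's own proof: the paper simply observes that the proof of \eqref{theorem: Schur-Horn:1}$\implies$\eqref{theorem: Schur-Horn:2} in Theorem \ref{theorem: Schur-Horn} invokes Lemma \ref{lemma: implementation of T-transforms} for an arbitrary selfadjoint matrix (not just $D_y$), so the argument can be repeated verbatim with $A$ in place of $D_y$. You have spelled out that iteration explicitly, which is exactly what the paper sketches.
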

\begin{proof}
Note that the proof of \eqref{theorem: Schur-Horn:1}$\implies$\eqref{theorem: Schur-Horn:2} in Theorem \ref{theorem: Schur-Horn} uses Lemma \ref{lemma: implementation of T-transforms}, which works even if the selfadjoint matrix is not selfadjoint. So we can repeat the argument verbatim with $A$ instead of $D_y$. 
\end{proof}

\section{Infinite-Dimensional Majorisation and Diagonals of Selfadjoint Operators}
\label{section: main theorems}

Notation: we work on $\bh $ with $\h$ a separable Hilbert space (this is not an essential restriction, but it simplifies  notation and arguments). We write $\{E_{kj}\}$ for the canonical matrix units in $\bh$; this implies having a fixed orthonormal basis $\{e_j\}$, that will remain fixed throughout. Given any operator $T\in\bh$, we can consider its ``entries'' (thinking of it as an infinite matrix)
\[
T_{kj}=\langle Te_j,e_k\rangle.
\]

The notion of majorisation---Definition \ref{definition: majorisation}---can be clearly applied to infinite sequences, at least under certain conditions. For instance, it makes sense straighforwardly for real sequences in $\ell^1(\NN)$. But one immediately runs into problems: for example, for sequences in $\ell^1(\NN)$ the notions in Definition \ref{definition: majorisation} and in \eqref{condition: arriba y abajo} in Theorem \ref{theorem: equivalent conditions for majorisation} are not equivalent; notwithstanding the fact the the ordering $x_j^\uparrow$ is not even defined for such a sequence. This last objection is not a big one, and was addressed by Neumann \cite{Neumann1999} by considering the numbers
\[
U_k(x)=\sup\{\sum_Kx_j:\ |K|=k\},\ \ L_k(x)=\inf\{\sum_Kx_j:\ |K|=k\}
\]
instead of the sums in \eqref{condition: arriba y abajo} in Theorem \ref{theorem: equivalent conditions for majorisation}, and using
\[
U_k(x)\leq U_k(y),\ \ \ L_k(x)\geq L_k(y),\ \ \ \ k\in\NN
\]
as the definition of $x\prec y$. This makes sense even for $x,y\in\ell^\infty(\NN)$. So one can ask whether a Schur-Horn theorem can be considered in $\bh$; and this  was Neumann's result \cite{Neumann1999}: for real $y\in\ell^\infty(\NN)$,
\begin{equation}\label{equation: neumann}
\overline{\mbox{diag}\{UD_yU^*:\ U\in\mathcal U(\h)\}}^{\|\cdot\|_\infty}=\{x\in\ell^\infty(\NN):\ x\prec y\},
\end{equation}
where $D_y$ is the diagonal operator (in some orthonormal basis) with diagonal $y$, and $\mbox{diag}(T)$ is the diagonal of $T$ seen as an element of $\ell^\infty(\NN)$.

Starting from similar considerations,  majorisation and the Schur-Horn Theorem have been considered in other infinite-dimensional settings, like semifinite von Neumann algebras, and II$_1$-factors in particular \cite{ArgeramiMassey2007,ArgeramiMassey2008a,ArgeramiMassey2008b,ArgeramiMassey2009,
ArgeramiMassey2013,arveson-kadison2006,DykemaFangHadwinSmith2013,Hiai1987,Hiai1989,Kamei1983,Ravichandran2012}.

While Neumann's result \eqref{equation: neumann} is impressive, it does not address the question of what  the possible diagonals of a selfadjoint operator are. What is remarkable is that it does not answer the question even in the case where the spectrum consists of two points. Such characterisation is the content of Kadison's ``Theorem 15'' \cite{kadison2002b}---Theorem \ref{theorem: teorema 15} below.

It is immediately clear that not every $x\in\ell^\infty(\NN)$ with $x_j\in[0,1]$ for all $j$ can be the diagonal of a projection. Indeed, there is the immediate restriction that projections onto finite-dimensional subspaces have integer trace, i.e. we get the restriction $\sum x_j\in\NN$ whenever this sum is finite. Even for sequences with infinite sum, another restriction arises, from the fact that if $P$ is a projection, so is $P^\perp$: this means that $\{x_j\}$ is the diagonal of a projection if and only if $\{1-x_j\}$ is. So, for example, the sequence $3/4,7/8,15/16,\ldots$ is not the diagonal of a projection, because $(1-3/4)+(1-7/8)+\cdots=1/2\not\in\NN$.

The surprise in Kadison's result comes from the case where both $\sum x_j=\sum(1-x_j)=\infty$ (that is, the case of diagonals of projections with both infinite dimension and codimension). In this case no obvious restriction arises as in the other two cases, but Kadison discovered that there is still an obstruction: if we split the elements in the sequence according to their proximity to the two points in the spectrum (namely, $0$ and $1$ in the case of a projection) in two sequences $\{y_j\}$ and $\{z_j\}$ respectively (that is, $y_j\leq1/2<z_j$), and if $\sum y_j<\infty$ and $\sum (1-z_j)<\infty$, then the difference of these two sums has to be an integer; and, moreover, this is the only possible obstruction. For instance, the sequence
\[
\frac{1}{4},\frac{3}{4},\frac{1}{8},\frac{7}{8},\frac{1}{16},\frac{15}{16},\ldots
\]
is the diagonal of a projection with infinite dimension and co-dimension; but there exists no projection with diagonal
\[
\frac{1}{2},\frac{1}{4},\frac{3}{4},\frac{1}{8},\frac{7}{8},\frac{1}{16},\frac{15}{16},\ldots
\]
Indeed, in the first case the difference of the two sums mentioned above is $0$, while in the second case it is $1/2$---not an integer. But we only know this after going through the the proof of \eqref{theorem: teorema 15:0}$\implies$\eqref{theorem: teorema 15:00} in Theorem \ref{theorem: teorema 15} or the corresponding proofs in \cite{kadison2002b} and \cite{arv2006}. We refer the reader to \cite{arv2006} for an analysis and generalisation of the integer obstruction (see also \cite{BownikJasper2013b,Jasper2013}).

\bigskip

The next lemma
plays a key role in our  proof of \eqref{theorem: teorema 15:1}$\implies$\eqref{theorem: teorema 15:0} in Theorem \ref{theorem: teorema 15}.

\begin{lemma}\label{lemma: chebychev}
Let $\{b_j\}$ be a sequence with $0\leq b_j$ for all $j$, and $\sum_jb_j=\infty$. Fix $\delta>0$. Then there exists $n\in\NN$ and coefficients $t_1,\ldots,t_n\in[0,1]$ with $\sum t_j=1$ and such that
$\delta t_j\leq b_{j}$, $j=1,\ldots,n$.
\end{lemma}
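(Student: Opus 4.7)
The plan is to reduce the problem to showing that $\sum_j \min(b_j,\delta) = \infty$, and then produce the $t_j$ by normalising the truncations $\min(b_j,\delta)$.

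First I would observe that the required inequality $\delta t_j \leq b_j$, combined with $t_j \in [0,1]$, is equivalent to $t_j \leq \min(1, b_j/\delta) = \min(\delta, b_j)/\delta$. So the natural ansatz is to take $t_j$ proportional to $\min(b_j,\delta)$. For this to yield $\sum_{j=1}^n t_j = 1$ for some $n$, it suffices that the partial sums $S_n := \sum_{j=1}^n \min(b_j,\delta)$ eventually reach $\delta$.

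Next I would prove $\sum_j \min(b_j,\delta) = \infty$ by splitting into two cases according to $A = \{j : b_j \geq \delta\}$. If $A$ is infinite, then the sum already contains infinitely many copies of $\delta$. If $A$ is finite, then $\sum_{j \in A} b_j < \infty$, so $\sum_{j \notin A} b_j = \infty$; for $j \notin A$ we have $\min(b_j,\delta) = b_j$, and the sum diverges. Either way, we may pick $n$ with $S_n \geq \delta$.

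Finally, I would set
\[
t_j = \frac{\min(b_j,\delta)}{S_n}, \qquad j=1,\ldots,n,
\]
and verify the three conditions. That $\sum t_j = 1$ is immediate. Since $\min(b_j,\delta) \leq \delta \leq S_n$, each $t_j \in [0,1]$. For the main inequality, one checks the two cases $b_j \leq \delta$ (where $\delta t_j = \delta b_j/S_n \leq b_j$ using $S_n \geq \delta$) and $b_j > \delta$ (where $\delta t_j = \delta^2/S_n \leq b_j$, again using $S_n \geq \delta < b_j$). There is no serious obstacle; the only content is the divergence of the truncated series, which is a short case split.
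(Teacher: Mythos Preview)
Your proof is correct and follows the same normalisation idea as the paper, but the paper's argument is shorter: since $\sum_j b_j=\infty$, one simply chooses $n$ with $\sum_{j=1}^n b_j\geq\delta$ and sets $t_j=b_j\big/\sum_{k=1}^n b_k$ directly. The truncation by $\min(b_j,\delta)$ and the accompanying case split are unnecessary, because $t_j\leq1$ is automatic from the nonnegativity of the $b_k$, and $\delta t_j\leq b_j$ follows immediately from $\sum_{k=1}^n b_k\geq\delta$.
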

\begin{proof}
Since $\sum b_j=\infty$, there exists $n$ such that $\sum_1^nb_j\geq\delta$. Now let $t_j=b_j/\sum_1^nb_k$.
Then $b_j/t_j=\sum_1^nb_k\geq\delta$.
\end{proof}

In the following lemma we summarize a couple of very basic estimates for projections that we will use in 
an essential way in 
the proof of 
Theorem \ref{theorem: teorema 15}. 

\begin{lemma}\label{lemma: properties of projections}
Let $P\in\bh$ be a projection. Then
\begin{equation}\label{equation: projections}
\sum_s|P_{st}|^2=P_{tt},\ \ \ (1-P_{tt})^2+\sum_{s\ne t}|P_{st}|^2=1-P_{tt}.
\end{equation}
In particular, $|P_{st}|^2\leq\min\{P_{tt},P_{ss},1-P_{tt},1-P_{ss}\}$ for any $s,t$, and
\[
\sum_{s\ne t}|P_{st}|^2\leq\min\{P_{tt},1-P_{tt}\}.
\]
\end{lemma}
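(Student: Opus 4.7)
The plan is to reduce everything to the two basic facts that a projection $P$ satisfies $P=P^{*}$ and $P=P^{2}$, and then to exploit the symmetric role played by $P$ and its orthogonal complement $I-P$, which is also a projection.

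First I would prove the first identity in \eqref{equation: projections}. Since $P=P^{*}P$, I can compute
\[
P_{tt}=\langle Pe_t,e_t\rangle=\langle P^{*}Pe_t,e_t\rangle=\|Pe_t\|^{2}=\sum_{s}|\langle Pe_t,e_s\rangle|^{2}=\sum_{s}|P_{st}|^{2},
\]
which is exactly the first identity. For the second identity, I would apply this identity not to $P$ but to $Q:=I-P$, which is also a projection. Its matrix entries are $Q_{tt}=1-P_{tt}$ and $Q_{st}=-P_{st}$ for $s\neq t$, so the first identity applied to $Q$ reads
\[
1-P_{tt}=Q_{tt}=\sum_{s}|Q_{st}|^{2}=(1-P_{tt})^{2}+\sum_{s\neq t}|P_{st}|^{2},
\]
which is the second identity.

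For the consequences, I would note that from the first identity $|P_{st}|^{2}\leq\sum_{r}|P_{rt}|^{2}=P_{tt}$; the bound $|P_{st}|^{2}\leq P_{ss}$ follows either by symmetry (since $P_{st}=\overline{P_{ts}}$, so $|P_{st}|^{2}=|P_{ts}|^{2}\leq P_{ss}$ by the same inequality applied with the roles of $s$ and $t$ reversed) or directly from $\|Pe_s\|^{2}=P_{ss}$. The bounds $|P_{st}|^{2}\leq 1-P_{tt}$ and $|P_{st}|^{2}\leq 1-P_{ss}$ follow by applying the same two inequalities to $Q=I-P$, since $|Q_{st}|=|P_{st}|$ for $s\neq t$.

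Finally, for the last inequality, the second identity gives $\sum_{s\neq t}|P_{st}|^{2}=(1-P_{tt})-(1-P_{tt})^{2}=P_{tt}(1-P_{tt})$. Since $P$ is a projection we have $0\leq P_{tt}\leq 1$, so $P_{tt}(1-P_{tt})$ is bounded by both $P_{tt}$ and $1-P_{tt}$, yielding the claimed bound by $\min\{P_{tt},1-P_{tt}\}$. There is really no technical obstacle here; the only thing to be careful about is to justify that $P_{tt}\in[0,1]$, which is immediate from $0\leq P\leq I$ applied to the unit vector $e_t$.
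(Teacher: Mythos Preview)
Your proof is correct and follows essentially the same approach as the paper: both derive the two identities from $P^2=P$ and $(I-P)^2=I-P$ written in matrix entries, use $P_{st}=\overline{P_{ts}}$ for the individual entry bounds, and obtain the last estimate from $\sum_{s\neq t}|P_{st}|^2=P_{tt}(1-P_{tt})\leq\min\{P_{tt},1-P_{tt}\}$. Your write-up is slightly more explicit (computing $P_{tt}(1-P_{tt})$ directly and phrasing the first identity via $\|Pe_t\|^2$), but the underlying argument is identical.
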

\begin{proof}
The equalities in \eqref{equation: projections} are simply the equalities $P^2=P$, $(I-P)^2=I-P$, expressed in terms of the entries of $P$. The first estimate follows from \eqref{equation: projections} and the fact that $P_{st}=\overline{P_{ts}}$ (from $P=P^*$). The second estimate follows directly from \eqref{equation: projections} and the fact that, since $P_{tt},1-P_{tt}\in[0,1]$, we have $0\leq P_{tt}-P_{tt}^2\leq P_{tt}$, $0\leq 1-P_{tt}-(1-P_{tt})^2\leq1-P_{tt}$.
\end{proof}

We include two more elementary lemmas:
\begin{lemma}\label{lemma: subsequence}
Let $\{a_n\}_{j\in\NN}\subset(0,1)$ with $\sum_na_n=\infty$. Then there exists a monotone subsequence $\{b_n\}$ of $\{a_n\}$ (possibly after reordering) with $\sum_nb_n=\infty$. 
\end{lemma}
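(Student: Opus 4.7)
The plan is to split on whether $a_n\to 0$ or not; these two cases require different constructions (extract a monotone subsequence in one case, reorder the whole sequence in the other), and this dichotomy is what makes the ``possibly after reordering'' clause necessary in the statement.

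If $a_n\not\to 0$, then $\limsup_n a_n>0$ and there exist $\epsilon>0$ and infinitely many indices $n$ with $a_n\ge\epsilon$. The subsequence consisting of those terms lies in the bounded interval $[\epsilon,1]$, so I would apply the classical ``every real sequence has a monotone subsequence'' lemma to extract a monotone $\{b_n\}$. Since every $b_n\ge\epsilon$, trivially $\sum_n b_n=\infty$, and no reordering is required.

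If $a_n\to 0$, I would instead reorder the entire sequence in non-increasing order. Because $a_n\to 0$, each super-level set $\{n:a_n\ge\epsilon\}$ is finite; in particular the multiset $\{a_n\}\subset(0,1)$ attains its maximum. Repeatedly removing a maximum from what is left produces an enumeration $b_1\ge b_2\ge\cdots$ which is a bijective reordering of $\{a_n\}$, because any specific term $a_m$ must be picked within $|\{n:a_n\ge a_m\}|$ steps. As this is a rearrangement of a divergent series of positive terms, $\sum_n b_n=\sum_n a_n=\infty$.

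I do not foresee any genuine obstacle. The one item worth verifying carefully is that the ``pick-a-maximum'' procedure in the second case actually exhausts every term, but this is immediate from the finiteness of each super-level set of $\{a_n\}$. The argument therefore reduces to the standard monotone-subsequence lemma together with the observation that a null sequence can always be rearranged monotonically.
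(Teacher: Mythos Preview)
Your argument is correct and follows essentially the same route as the paper: both split on whether $a_n\to 0$ (equivalently, whether $0$ is the only accumulation point), reorder the whole sequence non-increasingly when it is, and otherwise extract a subsequence bounded away from zero. The only cosmetic difference is in the non-null case, where the paper passes through a convergent subsequence and reorders one side of it, while you invoke the monotone-subsequence lemma directly; your version has the small bonus that no reordering is needed in that case.
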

\begin{proof}
If $0$ is the only accumulation point of $\{a_n\}$, we can take $\{b_n\}$ to be a non-increasing reordering of $\{a_n\}$. Otherwise, let $c$ be a nonzero accumulation point. Then there exists a subsequence  $\{b_n''\}$ of $\{a_n\}$ that converges to $c$. Infinitely many of these will be to one side of $c$: say $\{b_n'\}$, with $b_n'\leq c$ (or $b_n'\geq c$, if it is the other case) and $b_n'\to c$. Now we can take $\{b_n\}$ to be a non-decreasing (resp. non-increasing) reordering of $\{b_n'\}$. The sum is clearly infinite as the terms do not go to zero. 
\end{proof}


\begin{lemma}\label{lemma: diagonal with integer partial sums}
Let $\{d_j\}_{j\in\NN}\subset[0,1]$ and let $\NN=\bigcup_kH_k$ be a partition of $\NN$ by finite sets such that 
\[
s_k:=\sum_{j\in H_k}d_j\in\NN
\]
for all $k$. Then there exists a projection $F\in\bh$ with diagonal $\{d_j\}$. 
\end{lemma}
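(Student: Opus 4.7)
The plan is to reduce everything to the finite-dimensional Carpenter's Theorem (Corollary \ref{corollary: finite-dimensional carpenter}) block by block, and then reassemble. Concretely, the partition $\NN=\bigcup_k H_k$ gives an orthogonal decomposition $\hil=\bigoplus_k \hil_k$, where $\hil_k:=\overline{\mathrm{span}}\{e_j:j\in H_k\}$ is finite-dimensional of dimension $n_k:=|H_k|$.

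For each $k$, the finite sequence $(d_j)_{j\in H_k}$ lies in $[0,1]^{n_k}$ and satisfies $\sum_{j\in H_k}d_j=s_k\in\NN$. Hence, by Corollary \ref{corollary: finite-dimensional carpenter}, there exists a projection $P_k\in\style B(\hil_k)$ whose diagonal, written in the basis $\{e_j\}_{j\in H_k}$, is exactly $(d_j)_{j\in H_k}$.

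Now I would define
\[
F=\bigoplus_k P_k\in\bh,
\]
i.e. $F$ acts as $P_k$ on each $\hil_k$ and maps distinct summands to zero across blocks. Since each $P_k$ is a projection on $\hil_k$, we have $\|P_k\|\leq 1$, so $F$ is a well-defined bounded operator with $\|F\|\leq 1$; moreover $F^*=\bigoplus_k P_k^*=F$ and $F^2=\bigoplus_k P_k^2=\bigoplus_k P_k=F$, so $F$ is a projection. For $j\in H_k$ we have
\[
F_{jj}=\langle Fe_j,e_j\rangle=\langle P_ke_j,e_j\rangle=d_j,
\]
so the diagonal of $F$ in the fixed basis $\{e_j\}$ is precisely $\{d_j\}$.

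There is essentially no obstacle here: the finite-dimensional Carpenter's Theorem does all the work within each block, and the block-diagonal direct sum automatically gives a projection on $\hil$ because the blocks are mutually orthogonal and the operator norms of the $P_k$ are uniformly bounded by $1$.
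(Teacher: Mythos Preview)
Your proof is correct and follows essentially the same approach as the paper: apply the finite-dimensional Carpenter's Theorem (Corollary~\ref{corollary: finite-dimensional carpenter}) on each block $H_k$, then form the block-diagonal direct sum. The paper additionally invokes Lemma~\ref{lemma: 3-dimensional majorisation 2} to record the majorisation before citing Corollary~\ref{corollary: finite-dimensional carpenter}, but that step is already contained in the corollary itself; conversely, you supply slightly more detail on why the direct sum $\bigoplus_k P_k$ is a bounded projection, which the paper leaves implicit.
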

\begin{proof}
For each $k$, we get from Lemma \ref{lemma: 3-dimensional majorisation 2} that
\[
(d_j:\ j\in H_k)\prec(\overbrace{1,\ldots,1}^{s_k\text{ times}},\overbrace{0,\ldots,0}^{|H_k|-s_k\text{times}});
\]  by Corollary \ref{corollary: finite-dimensional carpenter} there exists a matrix projection $F_k$ with diagonal $\{d_j:\ j\in H_k\}$. Then the block-diagonal operator with blocks $F_k$ has the right diagonal. 
\end{proof}

\begin{definition}\label{definition: kadison's condition}
Let $f=\{a_n\}_{j\in\NN}$ be a sequence, with $a_n\in[0,1]$ for all $n$.
Given the sets
\[
N_0=\{n: a_n\leq\frac12\},\ \ \ \ M_0=\{n: a_n>\frac12\},
\]
we define the numbers (possibly infinite)
\begin{equation}\label{equation: a,b}
a_f=\ds\sum_{n\in N_0}a_n,\ \ \  b_f=\ds\sum_{n\in M_0}1-a_n.
\end{equation}
\end{definition}

So now we are in position to state and prove Kadison's celebrated Carpenter's Theorem \cite[Theorem 15]{kadison2002b}. As mentioned above, the proof we provide is quite different---and shorter, although far from trivial---than the original.
Part of the motivation for finding the new proof was the fact that
we have personally witnessed Kadison jokingly asserting, at GPOTS 2006, that he didn't understand
his own proof. Another part came from actual efforts to help some of our graduate students to get to grips with the proof in \cite{kadison2002b}. Recently, Bownik and Jasper \cite{BownikJasper2013a} have published a different proof of the implication \eqref{theorem: teorema 15:00}$\implies$\eqref{theorem: teorema 15:0} in Theorem \ref{theorem: teorema 15}.

We still use Kadison's trick---Lemma \ref{lemma: the 2x2 trick}---in the  proof \eqref{theorem: teorema 15:1}$\implies$\eqref{theorem: teorema 15:0} in Theorem \ref{theorem: teorema 15} in an essential way, and we also use the finite-dimensional Schur-Horn Theorem \ref{theorem: Schur-Horn}. We  dispense with reordering in the inductive step, after first reordering a particular subsequence. For the finite case our proof is radically different from Kadison's, in that we only require the finite-dimensional Schur-Horn Theorem \ref{theorem: Schur-Horn}---as opposed to Kadison's original proof or Bownik-Jasper's where an infinite-dimensional Carpenter's Theorem  is used \cite[Theorem 13]{kadison2002b}, \cite[Theorem 2.1]{BownikJasper2013a}; this allows us to obtain Kadison's Theorems 13 and 14 in \cite{kadison2002b} as straightforward corollaries. The proof of the integer condition is also different from Kadison's  \cite{kadison2002b} and Arveson's \cite{arv2006}; it does not require approximations, and it was inspired by Effros proof \cite{Effros1989} of the fact that if the difference of two projections is trace-class, then its trace is an integer---see \cite{Simon1994} for further results in this direction, that relate to Arveson's ``index'' approach \cite{arv2006}. A direct application of Effros' result does not seem to be possible here, since the differences of projections that arise  are not necessarily trace-class (they are Hilbert-Schmidt, though).

\begin{theorem}[Kadison's Carpenter's Theorem]\label{theorem: teorema 15}
Let $f=\{a_n\}_{n\in\NN}$ be a sequence with $a_n\in[0,1]$ for all $n$. Then the following statements are equivalent:
\begin{enumerate}
\item\label{theorem: teorema 15:0} There exists a projection $P\in\bh$ with diagonal $f$;
\item\label{theorem: teorema 15:00} One of the following holds:
\begin{enumerate}
\item\label{theorem: teorema 15:1} $a_f+b_f=\infty$;
\item\label{theorem: teorema 15:2} $a_f+b_f<\infty$, and $a_f-b_f\in\ZZ$.
\end{enumerate}
\end{enumerate}
\end{theorem}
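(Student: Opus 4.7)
The plan is to prove the two implications separately. I derive $(1) \Rightarrow (2)$ from an Effros-inspired spectral argument on a $2 \times 2$ block decomposition of $P$, and prove $(2) \Rightarrow (1)$ constructively, using Kadison's trick and the finite-dimensional Carpenter's theorem.

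For $(1) \Rightarrow (2)$, I assume $a_f + b_f < \infty$ (otherwise $(2a)$ already holds) and aim to show $a_f - b_f \in \ZZ$. Let $Q = \sum_{n \in M_0} E_{nn}$ and write $P$ in block form
\[
P = \begin{pmatrix} A & B \\ B^* & C \end{pmatrix}
\]
with respect to $Q\h \oplus (1 - Q)\h$. Positivity gives $C = (1 - Q)P(1 - Q) \geq 0$ and $1 - A = Q(1 - P)Q \geq 0$, with diagonals summing to $a_f$ and $b_f$ respectively; hence both are trace class. Expanding $P^2 = P$ blockwise yields $BB^* = A(1 - A)$, $B^*B = C(1 - C)$, and the intertwining $(1 - A)B = BC$. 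Through this intertwining, $B$ sends each $c$-eigenspace of $C$ with $c \in (0, 1)$ injectively to the $c$-eigenspace of $1 - A$, with $B^*$ providing the inverse correspondence; injectivity of both maps follows from $\|Bv\|^2 = c(1 - c)\|v\|^2$. Thus $C$ and $1 - A$ have identical finite multiplicities on $(0, 1)$, and these contributions cancel in the trace difference, leaving
\[
a_f - b_f = \tr(C) - \tr(1 - A) = \dim \ker(1 - C) - \dim \ker A \in \ZZ,
\]
with both kernels finite-dimensional by trace-class positivity. This argument only requires the diagonal blocks $C$ and $1 - A$ to be trace class, never $P - Q$ itself (which is only Hilbert--Schmidt in general), sidestepping the exact point at which a direct appeal to Effros's theorem would fail.

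For $(2a) \Rightarrow (1)$, I may assume $a_f = \infty$ (otherwise replace $P$ by $1 - P$, which swaps $a_f \leftrightarrow b_f$). The plan is to partition $\NN = \bigsqcup_k H_k$ into finite blocks with $\sum_{j \in H_k} a_j \in \NN$, so that Lemma \ref{lemma: diagonal with integer partial sums} delivers the projection. Since a generic sequence with $a_f = \infty$ does not admit such a partition directly, I would first apply Lemma \ref{lemma: subsequence} to extract a monotone divergent-sum subsequence from $\{a_n : n \in N_0\}$ and reorder it to serve as a \emph{reservoir} of small entries, leaving the remaining indices in their original order. Then induct: accumulate indices into the current block until a fractional carry $\delta \in (0, 1)$ arises; Lemma \ref{lemma: chebychev} applied to the unused reservoir yields weights $t_j$ with $\delta t_j \leq a_j$, and iterated application of Kadison's trick (Lemma \ref{lemma: the 2x2 trick}) along those positions redistributes diagonal mass so the block closes at an integer sum, shifting a fresh fractional carry into the next block. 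Corollary \ref{corollary: finite-dimensional carpenter} realizes each $H_k$ as a finite matrix projection. Case $(2b) \Rightarrow (1)$ proceeds by an analogous construction in which the integer condition $a_f - b_f \in \ZZ$ replaces the role of a divergent reservoir: a large $N \times N$ Schur--Horn block absorbs the $\ell^1$-summable defect of $\{a_n\}$ from a reference $\{0, 1\}$-sequence into an integer-trace finite segment, and iterated $2 \times 2$ Kadison moves propagate the residual tail mass, converging in the strong operator topology thanks to $a_f + b_f < \infty$.

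The principal obstacle is the block-closing step in $(2a)$: Kadison's $2 \times 2$ trick only redistributes diagonal mass within pairs without altering totals, so closing a block at a precise integer sum requires a reservoir of arbitrarily small entries whose sum diverges. Lemma \ref{lemma: chebychev} is exactly the combinatorial mechanism for absorbing any fractional carry into finitely many reservoir entries, and it is the divergence $a_f = \infty$---not the integer condition---that drives this case. In $(2b)$ the integer condition $a_f - b_f \in \ZZ$ plays the analogous role, guaranteeing that the finite Schur--Horn core carries an integer trace.
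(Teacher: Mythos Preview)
Your argument for $(1)\Rightarrow(2)$ is correct and takes a genuinely different route from the paper. Both start from the same setup---$QPQ$ and $Q^\perp P^\perp Q^\perp$ trace-class, $P-Q$ only Hilbert--Schmidt---but the paper decomposes the compact operator $(P-Q^\perp)^2$ spectrally, observes that $P$ and $Q$ commute with each finite-rank spectral projection $R_k$, and writes $a_f-b_f$ as a convergent series whose terms are traces of differences of finite-rank projections, hence integers. You instead work directly with the $2\times2$ block form of $P$, using the intertwining $(1-A)B=BC$ together with $B^*B=C(1-C)$ and $BB^*=A(1-A)$ to set up bijections between the $c$-eigenspaces of $C$ and of $1-A$ for each $c\in(0,1)$, so that only the eigenvalue $1$ survives in $\tr(C)-\tr(1-A)$. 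Your version is shorter and more transparent; the paper's version, on the other hand, makes the connection to the index-theoretic viewpoint of Arveson and Effros more visible.

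For $(2)\Rightarrow(1)$ your sketch invokes the same tools as the paper (Lemmas~\ref{lemma: subsequence}, \ref{lemma: chebychev}, \ref{lemma: diagonal with integer partial sums}, Kadison's trick, the finite-dimensional Carpenter theorem), but there is a genuine gap in case $(2a)$. Kadison's $2\times2$ trick preserves diagonal sums, so it cannot simultaneously ``close a block at an integer sum'' and ``shift a fresh fractional carry into the next block'' while keeping the construction block-diagonal over the partition $\{H_k\}$. The paper resolves this by a two-phase construction that your sketch does not articulate: first one builds a projection $F$ whose diagonal is a \emph{perturbation} $\{d_j\}$ of $\{a_j\}$---certain reservoir entries raised by $s_j\delta_k$ and others lowered by $t_j\delta_k$, chosen via Lemma~\ref{lemma: chebychev} so that each block of $\{d_j\}$ has integer sum---and only \emph{afterwards} does one conjugate $F$ by a block-diagonal unitary, using Lemma~\ref{lemma: majorisation concentrates} and Corollary~\ref{corollary: dispersa delta} (the perturbed entries majorise the original ones on each sub-block), to restore the diagonal $\{a_j\}$. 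Your description conflates these two phases; as written it suggests that a single pass of Kadison moves yields both integer block-sums and the target diagonal, which is impossible. Your sketch for $(2b)$ is likewise in the right spirit but too compressed: the paper's argument there hinges on a careful strong-convergence estimate via Lemma~\ref{lemma: properties of projections}, which you gesture at but do not supply.
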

\begin{proof}

\eqref{theorem: teorema 15:0}$\implies$\eqref{theorem: teorema 15:00} If $a_f+b_f=\infty$, then we are done. So we have to prove that if $a_f,b_f$ are both finite, then $a_f-b_f\in\ZZ$.
The argument that follows is inspired by Effros' proof of Lemma 4.1 in \cite{Effros1989}.
Let $Q=\sum_{n\in N_0}E_{nn}$. It is easy to check that 
\[
\tr(QPQ)=a_f,\ \ \ \tr(Q^\perp P^\perp Q^\perp)=b_f.
\]
As $QPQ$, $Q^\perp P^\perp Q^\perp$ are positive, the equalities above imply that they
are both trace-class.

We also note that $P-Q^\perp$ is compact; actually, it is Hilbert-Schmidt.
Indeed, using  Lemma \ref{lemma: properties of projections},
\begin{align*}
\tr((P-Q^\perp)^2)&=\sum_k\sum_h|(P-Q^\perp)_{kh}|^2 =\sum_{k\in N_0}\left[P_{kk}^2+\sum_{h\ne k}|P_{hk}|^2\right]
\\ &\ \ \ +\sum_{k\in M_0}\left[(P_{kk}-1)^2+\sum_{h\ne k}|P_{hk}|^2\right]\\
&=\sum_{k\in N_0}P_{kk}+\sum_{k\in M_0}(1-P_{kk})=a_f+b_f<\infty.
\end{align*}

As $(P-Q^\perp)^2$ is compact and positive, we may write
\[
(P-Q^\perp)^2=\sum_k\lambda_kR_k,
\]
where $R_1,R_2,\ldots$ are pairwise orthogonal finite-rank projections with sum $I$, and $\lambda_1>\lambda_2>\cdots$ converges to zero. As each
nonzero $\lambda_k$ is an isolated point in the spectrum of $(P-Q^\perp)^2$, there exist continuous functions $f_1,f_2,
\ldots$ such that $R_k=f_k((P-Q^\perp)^2)$. A direct computation shows that $P(P-Q^\perp)^2=(P-Q^\perp)^2P$, $Q(P-Q^\perp)^2=(P-Q^\perp)^2Q$,
from where we deduce by functional calculus that $PR_k=R_kP$, $QR_k=R_kQ$ for all $k$; in particular,  $PR_k$, $QR_k$ are finite-rank projections
for all $k$.

As both $QPQ$, $Q^\perp P^\perp Q^\perp$ are trace-class,
\begin{align*}
a_f-b_f&=\tr(QPQ)-\tr(Q^\perp P^\perp Q^\perp)=\sum_k\tr(QPQR_k)-\tr(Q^\perp P^\perp Q^\perp R_k)\\
&=\sum_k\tr(PR_kQR_k)-\tr( P^\perp R_k Q^\perp R_k)
=\sum_k\tr(PR_kQR_k -P^\perp R_k Q^\perp R_k)\\
&
=\sum_k\tr(PR_k+QR_k- R_k).
\end{align*}
Note that initially we cannot move the $P$ and $Q$ inside the trace, because while $QPQ$ is trace class, $PQ$ and $Q$ are likely not. But  $QPQR_k
=(QR_k)(PR_k)(QR_k)$  is a product of finite rank projections and then we can perform the manipulations in the equalities above.

Now, since
$PR_k$, $QR_k$, and $R_k$ are finite-rank projections, their traces are integers; so $\tr(PR_k+QR_k- R_k)\in\ZZ$ for all $k$.  We have thus shown that
$a_f-b_f$ can be written as a convergent series where all terms are integers; this forces all but finitely many to be zero, and  $a_f-b_f\in\ZZ$.

\bigskip

\eqref{theorem: teorema 15:1}$\implies$\eqref{theorem: teorema 15:0}
We can assume without loss of generality that $0<a_n<1$ for all $n$. This is because it is trivial to get projections with any number of zeroes and/or ones in the diagonal, namely operators of the form $I\oplus0$.
So if we produce a projection with the nonzero $a_n$ in the diagonal, we can later include the ones and zeroes by adding an adequate direct summand.

As we mentioned before, a projection $P$ with diagonal $\{a_n\}$ exists if and only if a projection with diagonal $\{1-a_n\}$ exists (namely, $P^\perp=I-P$). So we can choose at will to work either with the numbers $\{a_n\}$ or the numbers $\{1-a_n\}$. So without loss of generality, let us assume that $a_f=\infty$---if that was not the case, it is for the numbers $1-a_n$.
We note here that it is enough to produce a projection with the $\{a_n\}$ in any order, since any permutation of the diagonal can be achieved by unitary conjugation.

Using Lemma \ref{lemma: subsequence} we will divide the $\{a_n\}$ in two subsequences $\{b_j\}$ and $\{c_j\}$, where the former is  monotone with $\sum b_j=\infty$, and the latter is the rest.  If $\{b_n\}$ is non-decreasing, we make it non-increasing by working with the $\{1-a_n\}$ instead. 

The proof consists of an inductive procedure. In an attempt to help clarity and avoid an abuse of complicated indices, we will show how the procedure works without writing the general induction step. 

The goal is to form a diagonal suitable for Lemma \ref{lemma: diagonal with integer partial sums}, and later tweak it to get the right one. Let $\delta_1=1-b_1$. The first entry in our diagonal will be $b_{m_1}+\delta_1$, with $m_1=1$. We will need to offset this $\delta_1$ somewhere else in the diagonal. Applying Lemma \ref{lemma: chebychev} to $\{b_2,b_3,\ldots\}$ and $\delta_1$, there exist convex coefficients $t_2,\ldots,t_{n_2}$ such that $b_j-t_j\delta_1\geq0$ for all $j=2,\ldots,n_2$. Choose $m_2$ such that $\sum_{n_2+1}^{m_2}b_j\geq1/(1-b_1)$. Let $s_j=b_j/\sum_{n_2+1}^{m_2}b_j$ and choose $\delta_2$ the least positive real number such that 
\[
\delta_2+c_1+\sum_{j=2}^{n_2}(b_j-t_j\delta_1)+\sum_{j=n_2+1}^{m_2}b_j\in\NN.
\]
As we do not want $\delta_2$ in our final diagonal, instead of putting it in an entry of its own we will distribute it among $b_{n_2+1},\ldots,b_{m_2}$. Note that $s_{n_2+1},\ldots,s_{m_2}$ are convex coefficients, and that 
\[
0\leq b_j+s_j\delta_2\leq b_1+s_j\leq b_1+b_j(1-b_1)\leq b_1+1-b_1=1.
\]
So
\begin{equation}\label{equation: diagonal 2}
b_{m_1+1}-t_{m_1+1}\delta_1,\ldots,b_{n_2}-t_{n_2}\delta_1,c_1,b_{n_2+1}+s_{n_2+1}\delta_2,\ldots,
b_{m_2}+s_{m_2}\delta_2
\end{equation}
is a set of numbers in $[0,1]$ with integer sum. Now we repeat the process and we will get
\begin{equation}\label{equation: diagonal 3}
b_{m_2+1}-t_{m_2+1}\delta_2,\ldots,b_{n_3}-t_{n_3}\delta_2,c_2,b_{n_3+1}+s_{n_3+1}\delta_3,\ldots,
b_{m_3}+s_{m_3}\delta_3,
\end{equation}
and so on. We are in position to Apply Lemma \ref{lemma: diagonal with integer partial sums} to obtain a projection $F$ with diagonal $b_1+\delta_1$,\eqref{equation: diagonal 2},\eqref{equation: diagonal 3}, etc.

Next, in the diagonal of $F$, we consider the groups of numbers
\begin{align}\label{equation: diagonal of F}
\nonumber b_{n_k+1}+s_{n_k+1}\delta_k,&\ldots,b_{m_k}+s_{m_k}\delta_k,\\ &b_{m_k+1}-t_{m_k+1}\delta_k,\ldots,
b_{n_{k+1}}-t_{m_{k+1}}\delta_k.
\end{align}
By Lemma \ref{lemma: majorisation concentrates}, the numbers in \eqref{equation: diagonal of F} majorise
$b_{n_k+1},\ldots,b_{m_k},b_{m_k+1},\ldots,b_{n_{k+1}}$.
By Corollary \ref{corollary: dispersa delta}, there exists a unitary $V_k$ that conjugates a selfadjoint matrix with diagonal \eqref{equation: diagonal of F} into one with diagonal 
$b_{n_k+1},\ldots,b_{m_k},b_{m_k+1},\ldots,b_{n_{k+1}}$.

Finally let $V\in\bh$ be the block-diagonal unitary that has $V_k$ in the entries corresponding to the numbers \eqref{equation: diagonal of F} and $1$ everywhere else in the diagonal (i.e. in the entries corresponding to the $c_k$). Then the projection $P=VFV^*$ has diagonal $\{b_j\}\cup\{c_j\}=\{a_n\}$ (in some order) as desired.

\bigskip

\eqref{theorem: teorema 15:2}$\implies$\eqref{theorem: teorema 15:0}

By the convergence of the two series in \eqref{equation: a,b}, we can find finite subsets $N_1\subset N_0$, $M_1\subset M_0$
such that
\[
\delta_1=\sum_{N_0\setminus N_1}a_n<1/2, \ \ \mu_1=\sum_{M_0\setminus M_1}(1-a_n)<\delta_1.
\]

By hypothesis,
\begin{align*}
0< q:&=\ds\sum_{N_1\cup M_1}a_n  + \delta_1-\mu_1
=\sum_{N_1}a_n + \delta_1 - (\sum_{M_1}(1-a_n) + \mu_1)+|M_1| \\
&=a_f-b_f+|M_1|\in\ZZ.
\end{align*}
As $0\leq\delta_1-\mu_1<1$ and $0\leq a_n\leq1$, we get from Lemma \ref{lemma: 3-dimensional majorisation 2} the majorisation
\[
(a_j:\ j\in N_1\cup M_1,\delta_1-\mu_1)\prec(\overbrace{1,\ldots,1}^{q\text{ times}},{0,\ldots,0})
\]

By Corollary \ref{corollary: finite-dimensional carpenter}, there exists a projection $P_0\in \bh$
with diagonal starting with $\{a_j:\ j\in N_1\cup M_1\}\cup\{\delta_1-\mu_1\}$, and zeroes elsewhere.

Next we choose finite subsets $N_2\subset N_0\setminus N_1$, $M_2\subset M_0\setminus M_1$ with
\[
\delta_2:=\sum_{j\in N_0\setminus(N_1\cup N_2)}\,a_j<\frac14,\ \ \mu_2:=\sum_{j\in M_0\setminus(M_1\cup M_2)}\,(1-a_j)<\delta_2.
\]
Then
\begin{align*}
\sum_{N_2}a_j+\sum_{M_2}a_j+\delta_2-\mu_2=\delta_1-\mu_1+|M_2|,
\end{align*}
and Lemma \ref{lemma: 3-dimensional majorisation} implies
\begin{equation}\label{equation: segunda mayorizacion}
(a_j:\ j\in N_2\cup M_2,\delta_2-\mu_2)\prec (\delta_1-\mu_1,\overbrace{1,\ldots,1}^{|M_2|\text{ times}}).
\end{equation}
So Corollary \ref{corollary: finite-dimensional carpenter} guarantees that we can find a unitary that will conjugate a matrix with diagonal the right-hand-side of \eqref{equation: segunda mayorizacion} into one with the left-hand-side of \eqref{equation: segunda mayorizacion} in the diagonal. Then we can use this unitary to conjugate the operator $P_0'$, obtained by replacing $|M_2|$ zeroes with ones in the diagonal of $P_0$, into an operator $P_1$ with diagonal
\[
\{a_j:\ j\in (N_1\cup N_2)\cup(M_1\cup M_2)\}\cup\{\delta_2-\mu_2\}
\]
and zeroes. 
From the fact that $P_0$ is a projection, we deduce that so are $P_0'$ and $P_1$. Now this process can be continued inductively to obtain projections $P_k$ with diagonal starting with \[
\{a_j:\ j\in(N_1\cup\cdots\cup N_k)\cup(M_1\cup\cdots\cup M_k\}\cup\{\delta_{k+1}-\mu_{k+1}\}
\]
and continuing with zeroes, and 
where $0\leq\delta_{k+1}-\mu_{k+1}<2^{-k}$ and
\[
\sum_{s\in N_0\setminus(N_1\cup\cdots\cup N_k)}a_s<2^{-k},\ \ \sum_{s\in M_0\setminus(M_1\cup\cdots\cup M_k)}(1-a_s)<\delta_k<2^{-k}.
\]

We want to show that the sequence $\{P_k\}_k$ converges strongly, as this will imply that its limit is a projection. By construction, this sequence leaves untouched the ``upper left corner'', and thus in the limit the diagonal will contain all the $a_n$.

Fix $e_t$ in the canonical basis. We will prove that $\lim_{k\to\infty}\|(P_{k+r}-P_k)e_t\|=0$ with the rate of convergence not depending on $r$. Since the sequence $\{P_k\}$ is uniformly bounded in norm, this is enough to guarantee strong convergence of the sequence $\{P_k\}$.

Write $\ell_k=1+\sum_1^k|N_k|+|M_k|$; this is the number of non-zero entries in the diagonal of $P_k$. Then
$P_{k+r}$ and $P_k$ agree on the upper left $(\ell_k-1)\times(\ell_k-1)$ block; and recall that the entries of $P_k$ are zero on every row and column beyond the $\ell_k$, and that
$P_{\ell_k,\ell_k}=\delta_{k+1}-\mu_{k+1}$. Also, every diagonal entry of $P_{k+r}-P_k$ is either less than $2^{-k}$ or bigger than $1-2^{-k}$ (all other diagonal entries appear in both $P_{k+r}$ and $P_k$, and get cancelled). 


For $k$ big enough, we will have $t<\ell_k$. This means that on the first $\ell_k-1$ rows, the $t$-column of $P_{k+r}-P_k$ is zero. Then
\begin{align*}
\|(P_{k+r}-P_k)e_t\|^2&=\sum_s|(P_{k+r})_{st}-(P_k)_{st}|^2=\sum_{s\geq\ell_k}|(P_{k+r})_{st}-(P_k)_{st}|^2\\
&\leq2\sum_{s\geq\ell_k}|(P_{k+r})_{st}|^2+2|(P_k)_{\ell_kt}|^2\\
&\ \ \ \ \ \ \text{(using Lemma \ref{lemma: properties of projections})}\\
&\leq2\sum_{s\geq\ell_k}\min\{(P_{k+r})_{ss},1-(P_{k+r})_{ss}\}+2(\delta_{k+1}-\mu_{k+1})\\
&\leq2\sum_{s\in N_0\setminus(N_1\cup\cdots\cup N_k)}a_s\ +\ 2\sum_{s\in M_0\setminus(M_1\cup\cdots\cup M_k)}(1-a_s)+\frac2{2^{k}}\\
&\leq \frac2{2^k}+\frac2{2^k}+\frac2{2^k}=\frac6{2^k}
\end{align*}
for all $r$. So there is a strong limit $P=\lim_kP_k$. Being  a strong limit of projections, $P$ is a projection. Regarding its diagonal, if we fix an index $t$, then
\[
P_{tt}=\langle Pe_t,e_t\rangle=\lim_k\langle P_ke_t,e_t\rangle=\lim_k(P_k)_{tt}=(P_j)_{tt}
\]
for any $j\geq t$. So $P$ has the desired diagonal.
\end{proof}

\begin{remark}\label{remark: the 1/2 does not matter}
We mention here the well-known fact that the $1/2$ used to define $N_0$ and $M_0$ in Definition \ref{definition: kadison's condition} does not play any particular
role: it can be replaced by any other $\alpha\in(0,1)$. Indeed,  for
any sequence of numbers in $[0,1]$ such that $\ds\sum_{a_n\leq\delta}a_n<\infty$,
$\ds\sum_{a_n>\delta}1-a_n<\infty$ for some $\delta>0$, and for any $\alpha,\beta\in(0,1)$,
\[
\sum_{a_n\leq\alpha}a_n-\sum_{a_a>\alpha}1-a_n\in\ZZ \iff
\sum_{a_n\leq\beta}a_n-\sum_{a_a>\beta}1-a_n\in\ZZ.
\]
In Kadison's original proof \cite{kadison2002b}, however, the number $1/2$ does play a key role in his estimates. The proof we presented above works the same if we replace $1/2$ with any  $\alpha\in(0,1)$. So does Bownik and Jasper's \cite{BownikJasper2013a}.
\end{remark}

\bigskip

We note below that Kadison's theorems 13 and 14 in \cite{kadison2002b} can be obtained as straightforward corollaries
of Theorem \ref{theorem: teorema 15}. This was not possible in Kadison's original work \cite{kadison2002b} as his Theorem 13 was used in his proof of his Theorem 15. The same happens in Bownik and Jasper's proof \cite{BownikJasper2013a}. Corollary \ref{theorem: kadison 13} can be seen an infinite-dimensional generalisation of Corollary
\ref{corollary: finite-dimensional carpenter}.

\begin{corollary}[Theorem 13 in \cite{kadison2002b}]\label{theorem: kadison 13}
Let $t_1,t_2,\ldots\ \in[0,1]$. Then the following statements are equivalent:
\begin{enumerate}
\item\label{theorem: kadison 13:1}
there exists a projection $P\in\bh$ with diagonal $t_1,t_2,\ldots$ and trace $m$;
\item\label{theorem: kadison 13:2} $\sum_jt_j=m$.
\end{enumerate}
\end{corollary}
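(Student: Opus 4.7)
The plan is to deduce the corollary directly from Theorem \ref{theorem: teorema 15}. Here $m$ should be understood as a non-negative integer; the case $m = \infty$ is already covered (in one direction) by the theorem with condition \eqref{theorem: teorema 15:1}, so I will focus on finite $m$.

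The implication \eqref{theorem: kadison 13:1}$\implies$\eqref{theorem: kadison 13:2} is immediate: a projection with finite trace $m$ is trace-class, and its trace equals the sum of its diagonal entries in any orthonormal basis, namely $\sum_j t_j$.

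For \eqref{theorem: kadison 13:2}$\implies$\eqref{theorem: kadison 13:1}, assume $\sum_j t_j = m \in \NN$. The main task is to verify the hypothesis of Theorem \ref{theorem: teorema 15}\eqref{theorem: teorema 15:2}. First, since $t_j > 1/2$ for every $j \in M_0$ and the total sum is finite, $M_0$ is finite, and consequently both
\[
a_f = \sum_{j\in N_0} t_j \leq m, \qquad b_f = \sum_{j\in M_0}(1-t_j) \leq |M_0|
\]
are finite, so $a_f+b_f<\infty$. Second, a short computation gives
\[
a_f - b_f = \sum_{j\in N_0} t_j + \sum_{j\in M_0} t_j - |M_0| = \Big(\sum_j t_j\Big) - |M_0| = m - |M_0| \in \ZZ,
\]
so condition \eqref{theorem: teorema 15:2} of Theorem \ref{theorem: teorema 15} holds. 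Applying that theorem produces a projection $P \in \bh$ with diagonal $\{t_j\}$, and by the easy direction proved above its trace must be $\sum_j t_j = m$.

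There is no real obstacle here: the entire content is the observation that finiteness of $\sum_j t_j$ forces $M_0$ to be finite, which both ensures $a_f+b_f<\infty$ and makes the integer condition $a_f - b_f = m - |M_0|$ automatic. The corollary then falls out of Theorem \ref{theorem: teorema 15} without further work.
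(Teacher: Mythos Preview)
Your proof is correct and follows essentially the same route as the paper's: both directions match, and in particular your justification that $M_0$ is finite (because each $t_j>1/2$ there while $\sum_j t_j<\infty$) is just a direct phrasing of the paper's observation that $0$ is the only accumulation point, leading to the identical computation $a_f-b_f=m-|M_0|\in\ZZ$.
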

\begin{proof}
\eqref{theorem: kadison 13:1}$\implies$\eqref{theorem: kadison 13:2}: we have that $P$ is trace-class
(as it is of finite-rank), so $\sum_{j=1}^\infty t_j=\text{Tr}(P)=m$.

\eqref{theorem: kadison 13:2}$\implies$\eqref{theorem: kadison 13:1}: the fact that $\sum_jt_j<\infty$
guarantees that $0$ is the only accumulation point of the sequence $\{t_j\}$. Then the set $M_0$, as in Definition
\ref{definition: kadison's condition}, is finite. So
\[
\sum_{j\in N_0}t_j-\sum_{j\in M_0}(1-t_j)=\sum_jt_j-|M_0|=m-|M_0|\in\ZZ
\]
and both sums are finite. By Theorem \ref{theorem: teorema 15} there exists a projection $P$
with diagonal $t_1,t_2,\ldots$
\end{proof}

\begin{corollary}[Theorem 14 in \cite{kadison2002b}]\label{theorem: kadison 14}
Let  $t_1,t_2,\ldots\ \in[0,1]$
Then the following statements are equivalent:
\begin{enumerate}
\item\label{theorem: kadison 14:1}
there exists a projection $P\in\bh$ with diagonal $t_1,t_2,\ldots$ and trace of $P^\perp$ equal to $m$;
\item\label{theorem: kadison 14:2} $\sum_j1-t_j=m$.
\end{enumerate}
\end{corollary}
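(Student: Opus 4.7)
The plan is to reduce Corollary \ref{theorem: kadison 14} directly to Corollary \ref{theorem: kadison 13} by passing to the complementary sequence $\{1-t_j\}$ and using the obvious involution $P\mapsto P^\perp=I-P$ on projections. Concretely, an operator $P\in\bh$ is a projection with diagonal $\{t_j\}$ if and only if $P^\perp$ is a projection with diagonal $\{1-t_j\}$, and $\tr(P^\perp)=m$ on one side corresponds exactly to $\tr(Q)=m$ on the other side, where $Q=P^\perp$.

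For \eqref{theorem: kadison 14:1}$\implies$\eqref{theorem: kadison 14:2}, I would argue that $P^\perp$ is a projection of finite trace, hence finite-rank and trace-class, so $\sum_j(1-t_j)=\tr(P^\perp)=m$, exactly as in the proof of Corollary \ref{theorem: kadison 13}.

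For \eqref{theorem: kadison 14:2}$\implies$\eqref{theorem: kadison 14:1}, I would apply Corollary \ref{theorem: kadison 13} to the sequence $\{1-t_j\}\subset[0,1]$, whose sum is $m\in\NN$, to obtain a projection $Q\in\bh$ with diagonal $\{1-t_j\}$ and $\tr(Q)=m$. Setting $P=I-Q$ yields a projection with diagonal $\{t_j\}$ satisfying $\tr(P^\perp)=\tr(Q)=m$.

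There is no real obstacle here: the whole content of the statement is carried by Corollary \ref{theorem: kadison 13} (and ultimately Theorem \ref{theorem: teorema 15}), and the symmetry $t_j\leftrightarrow 1-t_j$, $P\leftrightarrow P^\perp$ does the rest. The only minor point worth stating cleanly is that the diagonal of $I-P$ in the fixed basis $\{e_j\}$ is $\{1-t_j\}$, which is immediate from $\langle (I-P)e_j,e_j\rangle=1-\langle Pe_j,e_j\rangle$.
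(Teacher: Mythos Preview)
Your proposal is correct and is essentially identical to the paper's own proof: both reduce to Corollary~\ref{theorem: kadison 13} via the involution $P\leftrightarrow P^\perp$ and the corresponding substitution $t_j\leftrightarrow 1-t_j$. The only difference is that you spell out the two implications separately, whereas the paper compresses them into one sentence.
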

\begin{proof}
Apply Corollary \ref{theorem: kadison 13} to the numbers $\{1-t_j\}$, and use that  a projection $P$ has diagonal $\{1-t_j\}$ if and only if $P^\perp$ has diagonal $\{t_j\}$.
\end{proof}


\bibliographystyle{abbrv}
\bibliography{martin-refs,sh_bib}

\end{document}